\theoremstyle{plain}
\newtheorem{lemma}{Lemma}[section]
\newtheorem{theorem}[lemma]{Theorem}
\newtheorem{proposition}[lemma]{Proposition}
\theoremstyle{definition}
\newtheorem*{definition}{Definition}
\newtheorem*{remark}{Remark}
\newtheorem*{example}{Example}
\renewcommand{\labelenumi}{(\roman{enumi})}
\begin{document}

\title{On a game on graphs}

\author{Felix G\"unther\footnote{Berlin Mathematical School, Institut f\"ur Mathematik, MA 2-2, Technische
Universit\"at Berlin, Stra{\ss}e des 17. Juni 136, 10623 Berlin, Germany.}$\;^{,1}$ \and Irina Musta\textcommabelow t\u a  \footnotemark[1]$\;^{,2}$}

\date{}
\maketitle

\footnotetext[1]{Supported by the Deutsche Telekom Stiftung. E-mail: fguenth@math.tu-berlin.de}
\footnotetext[2]{Supported by the Berlin Mathematical School. E-mail: mustata@math.tu-berlin.de}

\begin{abstract}
\noindent
We start with the well-known game below:
Two players hold a sheet of paper to their forehead on which a positive integer is written. The numbers are consecutive and each player can only see the number of the other one. In each time step, they either say nothing or tell what number they have. Both of them will eventually figure out their number after a certain amount of time. The game is rather cooperative than competitive, and employs the notions of \emph{common knowledge} and \emph{mutual knowledge}. We generalize this game to arbitrary (directed and non-directed) simple graphs and try to establish for which graphs one or both of them will figure out the solution, and how long they do need to find it. We give a complete answer for the case of two players, even if they are both allowed to discuss before the start of the game. \\ \vspace{0.5ex}

\noindent
\textbf{2010 Mathematics Subject Classification:} 05C57.\\ \vspace{0.5ex}

\noindent
\textbf{Keywords:} game, (directed) graph, common knowledge, mutual knowledge.
\end{abstract}

\renewcommand{\labelenumi}{(\roman{enumi})}

\raggedbottom
\setlength{\parindent}{0pt}
\setlength{\parskip}{1ex}


\section{Introduction}\label{sec:intro}


\subsection{The original game}\label{ssec:orig}
We consider a two-player game, also known as a Conway paradox, originally introduced by Conway et al. in \cite{CP77} and analysed by van Ende-Boas, Groenendijk and Stokhof in \cite{BGS80}. Two players, $A$ and $B$, get each a sheet of paper on their forehead containing one element of a pair of two consecutive positive integers. They can only see the number on the other player. Now each player tries to figure out which number is on their sheet. The only information they get is that in each time step (which is the same for both, e.g., given by a clock) they are allowed to either say nothing or to say the correct number they have. After some time both of them will eventually figure out their number: as a base case example, if one of them has $1$ on their forehead, the other player will know immediately that they have $2$, in the next step the other player will know that they have $1$. If no one has $1$, both of them will know that no one has $1$ after the first time step. In the second round a player will know 
immediately 
the right answer if the other has $2$ and so on. This is formalized in the Theorem on page 5 in \cite{BGS80}.

The paradox is that although they might figure out immediately that both numbers are positive without being told so, they cannot figure out the correct solution unless a lower (or upper) bound is given. In each step, they simulate playing a smaller pair of consecutive integers, hence the problem has an essentially finite descent approach, the direction being however bottom-up.

Equivalently, we can consider the set of positive integers as a semi-infinite path (a very simple graph structure). If an upper bound is given as well, the path is finite. Now both players are sitting on an edge knowing only the position of the other player and try to figure out their own position. The strategy we described above consists of cutting off leaves (or edges, as in \cite{BGS80}, but the term ``leaves'' is better suiting our purpose).


\subsection{Background}\label{ssec:theory}

One classical formulation of such a problem we find in the book of Fagin, Halpern, Moses and Vardi \cite{FHMV95}: A family has $n$ children playing outdoors one day. Some of them (for instance $k$) have gotten some mud on their foreheads, and whoever does, must clean up before dinner. Everyone can only see the faces of others, and no discussion of whose forehead is muddied happens. At some point the fathers goes out, looks at them, and says ``At least one of you has mud on their forehead.'' Nothing happens for the first $k-1$ time intervals, and during the $k$th, the muddied children go and wash their faces. The question is, what was the reasoning behind this. After all, the father does not impart new information to any child (assuming $k\geq 2$). However, what he says makes each child aware that the others have the same information.

As for example in \cite{FHMV95}, we can formalize the above, by stating that these paradoxes play on the notion of mutual vs. common knowledge. While mutual knowledge is limited to one epistemic level (``$A$ and $B$ know a fact $\varphi$''), common knowledge presumes an ad infinitum iteration of the statement: ``$A$ knows that $B$ knows that $A$ knows... the fact $\varphi$'' and the converse. Hence, if in the game from \ref{ssec:orig} no bounds for the pair of numbers are given, it is mutual knowledge if, say,  $A$ and $B$ are assigned the pair $(5,6)$ that the numbers are positive, but, since lacking a base case,  no common knowledge can be derived from it. Thus, it is not possiible that the players work their way up to a solution.

Introduced by Lewis in \cite{L69}, and also analysed in \cite{FHMV95}, common knowledge plays a central role in daily life, as it is necessary for the interaction of a group of agents and for establishing convention. For instance, one such convention is that the red colour of the street light means ``stop''. Here, it does not suffice that each driver and pedestrian is aware of this, but that each is aware the other is aware, and so on (that is, it is safe to cross the road as a pedestrian when it is red for vehicles, since any pedestrian assumes any driver is aware of the convention).

Several models exist for this (and epistemological aspects in general), one of them using an extension of modal logic \cite{FHMV95,MH95}, leading to a graph representation based on the concept of possible worlds (from the perspective of any agent, given their current level of information) \cite{FHMV95}. For a detailed description we direct the reader to the given literature.


\subsection{Organization of the paper}\label{ssec:struct}

Our purpose it to generalize the game from~\ref{ssec:orig} to finite graphs, where the players are given the endpoint labels of an arbitrary edge, with the aim to guess it. There are essentially two variants of this game: Either they are both allowed to speak in each time step, or they talk one after another, the starting player being known. Note that in the base problem above, there is not a big difference between both variants.

In Section~\ref{sec:undirected}, we will discuss the case of two players on finite (simple) graphs. First, we show  in Section~\ref{ssec:strategy} that at least one player will figure out the solution for any edge of the graph if and only if the graph is a forest. The result is the same if both players are allowed to discuss before the start of the game, already knowing the graph. Moreover, cutting off leaves is the most effective strategy in the sense of knowing the solution as quickly as possible. It will follow that cutting off leaves is also the strategy they come up with if they are not allowed to discuss before. If they speak in turns, the strategy is very similar, but slightly differs with respect whose turn is it. Note that if both players can discuss a strategy, it does not matter whether they are allowed to talk at the same time or not. We describe the strategy of cutting off leaves in more detail in the proof of Theorem~\ref{thm:tree_only}.

In Section~\ref{ssec:time} we show that for any tree there exists a strategy such that both players can determine their position. In the case they do not discuss before, we give a criterion which player will answer first, how long the player needs and if the other one also can find out their position. Here it makes a difference whether they talk simultaneously or alternately in each time step.

The ideas we developed in the case of undirected graphs work for directed graphs in a similar way. Only the concept of a path will slightly differ. We state the corresponding results in Section~\ref{sec:directed}.

We close our paper with remarks to some possible future directions in Section~\ref{sec:future}.

\section{Two-player game on simple graphs}\label{sec:undirected}

Let $G=(V,E)$ be a finite simple graph. The players $A$ and $B$ are placed on nodes $u\neq v \in V$  not knowing on which position in $V$ they are, but knowing that their assigned nodes are adjacent. For simplicity, we identify the players with the vertices they are placed on. Now both try to figure out on which position they are knowing only $G$ and the vertex of the other. Since the connected component of the graph they are placed on is common knowledge, we will assume that $G$ is connected.


\subsection{Strategy}\label{ssec:strategy}

In the following, we will simultaneously handle the cases whether they are taking turns in speaking or are making their statements at the same time. For the next lemma, it does not matter whether both players have discussed a strategy before or come up with one independently.

\begin{lemma}\label{lem:edge_guessed}
Suppose both players have a strategy such that they are knowing the right answer when they say so. Let $A$ guess their position after $n$ time steps where $B$ has not said anything before time $n$. Then, if $A$ had been placed on a different vertex adjacent to $B$, $B$ would have guessed their position correctly not later than time $n-1$.
\end{lemma}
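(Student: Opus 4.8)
The plan is a short indistinguishability (possible‑worlds) argument that exploits a single structural feature of any admissible strategy: a player's move at time $t$ is determined by what that player has observed up to time $t$ --- namely the vertex on which the opponent sits and the full record of announcements so far (plus whose turn it is, in the alternating variant). I encode a state of affairs by the ordered pair $(x,y)$, read as ``$A$ sits on $x$, $B$ sits on $y$'', i.e.\ by an oriented edge of $G$. We compare the actual state $(u,v)$ --- in which, by hypothesis, nobody speaks through time $n-1$ and $A$ speaks at time $n$, necessarily announcing its true vertex $u$ --- with a state $(u',v)$, where $u'$ is an arbitrary neighbour of $v$ different from $u$. If $v$ has no such neighbour there is nothing to prove (this is forced whenever $n=1$: a player never speaks unless certain, so an announcement resting on no communication requires the opponent's vertex to have a unique neighbour); hence assume $u'$ exists, so that $n\geq 2$.

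The heart of the matter is the following claim, proved by induction on $t=1,\dots,n-1$: \emph{if $B$ never speaks up to time $n-1$ in the state $(u',v)$, then the states $(u,v)$ and $(u',v)$ generate identical public histories through time $t$, and in both of them both players are silent at step $t$.} For the inductive step, assume this for all earlier steps. Player $A$ sees $B$ on $v$ and the same announcement record in both states, hence acts the same way at step $t$ in both; since $A$ is silent at step $t$ in $(u,v)$ (as $A$ speaks only at time $n>t$), $A$ is silent at step $t$ in $(u',v)$ too. Player $B$ is silent at step $t$ in $(u,v)$ by hypothesis, and silent at step $t$ in $(u',v)$ by the assumption of the claim. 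So both players are silent at step $t$ in both states, and the public histories still coincide.

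To conclude, suppose for contradiction that $B$ does not speak by time $n-1$ in $(u',v)$. By the claim the public histories of $(u,v)$ and $(u',v)$ agree through time $n-1$, so at time $n$ player $A$ --- once more seeing $B$ on $v$ with the same record --- makes in $(u',v)$ exactly the move it makes in $(u,v)$: it announces the vertex $u$. But in $(u',v)$ the player $A$ actually sits on $u'\neq u$, so this announcement is false, contradicting the standing assumption that a player speaks only when certain of being right. Hence $B$ must speak at some step $\leq n-1$ in the state $(u',v)$, and being a correct announcement it names $B$'s position $v$; this is precisely the assertion of the lemma. The reasoning is uniform over the simultaneous and the alternating variants and regardless of whether the players coordinated their strategies beforehand.

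The only genuine care needed is in the first step: one must pin down exactly what a player gets to observe, so as to be certain that $A$ cannot distinguish $(u,v)$ from $(u',v)$ as long as $B$ has remained silent. Granting that, the induction and the final contradiction are routine.
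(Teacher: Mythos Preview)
Your argument is correct and is essentially the same indistinguishability-by-contradiction that the paper uses: assume $B$ stays silent through time $n-1$ in the alternative placement, observe that $A$'s information (the vertex $v$ and the silence record) is then identical in both worlds, so $A$ makes the same announcement in both, which is wrong in one of them. The paper compresses this into two sentences; your version spells out the induction on $t$ showing that the public histories coincide step by step (in particular that $A$ is also silent through $n-1$ in the alternative world), a point the paper leaves implicit but which is indeed needed for the argument to be airtight.
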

\begin{proof}
Assume the contrary. Then $A$ would have the same information in both situations, namely the position of $B$ and that $B$ had not said anything in the $n-1$ steps before. Thus, $A$ would guess the same position for both cases, contradicting that they knows the correct answer.
\end{proof}

The following proposition will describe the graphs where such a game is possible to finish, regardless of the chosen positions:

\begin{theorem}\label{thm:tree_only}
All edges of a simple graph $G$ can be guessed correctly by at least one of the players if and only if $G$ is a tree.
\end{theorem}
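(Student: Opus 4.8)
The plan is to prove the two implications separately. For the ``if'' direction I will exhibit the \emph{leaf-cutting} strategy and verify by induction on $|V(G)|$ that it solves every edge (this also supplies the detailed description of the strategy promised just before the statement); for the ``only if'' direction I will feed Lemma~\ref{lem:edge_guessed} around a cycle of $G$ to show that no fixed pair of sound strategies can solve all of its edges. Throughout I use that $G$ is connected, so ``forest'' and ``tree'' coincide here, and nothing below is sensitive to whether the players speak simultaneously or in turns, or whether they conferred beforehand.

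Suppose first that $G$ is a tree. Both players adopt the leaf-cutting strategy: each keeps track of the nested sequence $G=G_0\supseteq G_1\supseteq\cdots$, where $G_{j+1}$ is obtained from $G_j$ by deleting all of its leaves, and announces the vertex they occupy as soon as the vertex they see together with the history so far determines it uniquely; concretely, at time $1$ a player speaks exactly if the vertex they see is a leaf of $G$, naming its unique neighbour. I claim, by induction on $|V(G)|$, that this is sound (a player speaks only when correct) and that on every edge at least one player eventually speaks. The base case $|V(G)|=2$ is clear. For $|V(G)|\ge 3$, let $A$ be on $u$ and $B$ on $v$ with $uv\in E$. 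If $v$ is a leaf, $A$ correctly announces $u$ at time $1$, and symmetrically if $u$ is a leaf. Otherwise nobody speaks at time $1$, and from this silence $A$ infers that $u$ is not a leaf of $G$ (had it been, $B$ would have spoken), while $A$ already sees that $v$ is not a leaf; hence both $A$ and $B$ know that the two occupied vertices lie in $G_1$. Now $G_1$ is again a tree (the non-leaves of a tree induce a connected subtree), it has at least two and strictly fewer vertices than $G$, it still contains the edge $uv$, and continued leaf-cutting on $G$ is precisely leaf-cutting on $G_1$; so from this point on the situation coincides with the game played on $G_1$, and the inductive hypothesis finishes the argument. In particular the process terminates, as $G_j$ shrinks to the centre of the tree.

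For the converse, assume $G$ is not a tree; being connected it contains a cycle $C=v_0v_1\cdots v_{k-1}v_0$ with $k\ge 3$, and suppose for contradiction that $A$ and $B$ have sound strategies solving every edge of $G$. For $i\in\mathbb{Z}/k$ let $g_i$ be the earliest time at which some player correctly guesses in the game played on the cycle edge $\{v_i,v_{i+1}\}$; by assumption $g_i<\infty$, and in fact $g_i\ge 1$ since every vertex of $C$ has degree at least $2$ and so cannot be pinned down from initial information alone. At time $g_i$ at least one of the two players speaks. If it is the player on $v_{i+1}$, then the player on $v_i$ was silent before $g_i$ (nobody spoke earlier), so Lemma~\ref{lem:edge_guessed}, applied with this first speaker in the role of $A$ on $u=v_{i+1}$, the player on $v=v_i$ in the role of $B$, and the different neighbour $u'=v_{i-1}$ of $v_i$ (which exists and differs from $v_{i+1}$ because $k\ge 3$), shows that in the game on $\{v_{i-1},v_i\}$ the player on $v_i$ guesses by time $g_i-1$; hence $g_{i-1}\le g_i-1$. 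Symmetrically, if the player on $v_i$ speaks at time $g_i$, then $g_{i+1}\le g_i-1$. In either case $g_i>\min(g_{i-1},g_{i+1})$, so no index achieves the minimum of the finite cyclic sequence $g_0,\dots,g_{k-1}$ — a contradiction. Therefore some edge of $C$ cannot be guessed by either player.

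The main obstacle is bookkeeping rather than ideas. In the ``if'' direction one must check that ``no one has spoken up to time $t$'' conveys exactly the information ``both occupied vertices survive $t$ rounds of leaf-deletion'' and nothing more — i.e.\ that the leaf-cutting strategy is self-consistent — since this is what legitimizes treating the post-silence game as the game on $G_1$. In the ``only if'' direction one must be careful about which of the two players sits on which endpoint of each cycle edge when invoking Lemma~\ref{lem:edge_guessed}, so that genuinely the same fixed strategies are being compared across the edges $\{v_i,v_{i+1}\}$ and $\{v_{i\mp1},v_i\}$; once the (at most two) placements per edge are tracked this causes no trouble, and neither point is deep. Finally, reading Lemma~\ref{lem:edge_guessed} contrapositively also yields that leaf-cutting is time-optimal, which is why it is the strategy the players arrive at even without consultation, though that is not needed for the present statement.
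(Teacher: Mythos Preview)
Your proof is correct and follows essentially the same approach as the paper: the ``if'' direction is the leaf-cutting strategy (you make the induction explicit where the paper leaves it implicit), and the ``only if'' direction is the same minimal-time-on-a-cycle contradiction via Lemma~\ref{lem:edge_guessed}, phrased as ``every $g_i$ strictly exceeds one of its cyclic neighbours'' rather than the paper's equivalent ``the minimum has a smaller neighbour.'' The one place to tighten is the positioning bookkeeping you flag yourself: defining $g_i$ as the minimum over both placements of $A$ and $B$ on $\{v_i,v_{i+1}\}$ makes the application of Lemma~\ref{lem:edge_guessed} go through cleanly even for odd cycles, and is exactly what the paper does when it minimizes over edges \emph{and} positionings.
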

\begin{proof}
$\Rightarrow$: Suppose the contrary. Since $G$ is connected, $G$ contains a cycle. We choose an edge of the cycle and a positioning of the two players on it in such a way, that the time $n$ needed for guessing the edge correctly is minimal. By Lemma~\ref{lem:edge_guessed}, at least one neighboring edge of the cycle is guessed in at most $n-1$ steps (assuming the right positioning of players), contradiction.

$\Leftarrow$: In the following, we consider a tree $G$. For any $X \subseteq V(G)$ we define $L(X)$ as the set of all leaves of $G$ contained in $X$. 

Suppose first that they talk simultaneously. If both players have not discussed a strategy before, then they can at least cut off all vertices of degree $1$ of the graph after both players have said nothing, since one of them would now the answer immediately if they know the other one to be on a leaf. The strategy corresponding to cutting off all vertices of (current) degree $1$ will be called \emph{cutting off leaves}. Thus, all edges of a tree can be guessed with this strategy.

Suppose now they speak alternately, $A$ being the starting player. Since $G$ is a tree and thus bipartite, we take the corresponding decomposition of $V$ into $V_A$ and $V_B$, such that $A \in V_A$. After $A$ said nothing in the first round, $B$ knows that they are not in $L(V_B)$ and can cut all these leaves off. Both players can then remove $G(L(V_B))$ and update $G$. If they still do not know where they are, $A$ can now cut off all vertices of $L(V_A)$, followed again by removing $G(L(V_A))$, and so on. Since $G$ is a tree, they cannot get stuck. We call this strategy \emph{cutting off leaves} as well.
\end{proof}

The following theorem shows that they cannot exclude more vertices in a step, such that cutting off leaves is exactly the strategy they will play if they cannot talk about a strategy before.

\begin{theorem}\label{thm:main_strategy}
Let $G=(V,E)$ be a tree and suppose both players have a correct strategy. Take any positioning of $A$ and $B$. Partition $V$ into two independent sets $V_A$ and $V_B$ such that $A\in V_A$. Now we direct any edge to the player who knows the answer first (if they both say the correct answer in the same moment, the edge will be bidirected) and label it by the time in which the player says the correct answer. Here the choice of players on the edge is given by the partition of $V$ into $V_A$ and $V_B$.

Then the labels are strictly increasing along directed paths. Moreover, unless $G$ consists of only one edge, there exist either one or two vertices with all incident edges going inward. For all other vertices, there is exactly one incident edge going outward. If there are two vertices with all incident edges going inward, then they are connected by a bidirectional edge. Also, this is the only case an edge with two directions can appear.

In particular, any strategy is a variant of cutting off leaves, and both players come up with cutting off leaves if they do not discuss before.
\end{theorem}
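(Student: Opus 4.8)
The plan is to reduce everything to repeated applications of Lemma~\ref{lem:edge_guessed} made ``around a single vertex''. Fix the two sound strategies and the partition $V=V_A\sqcup V_B$ (we may assume the game actually terminates on every edge, otherwise ``the player who knows the answer first'' is undefined). On every edge the player placement is then forced by the partition, and if $w$ is a vertex and $u_1\neq u_2$ are neighbours of $w$, then moving the player sitting opposite $w$ from $u_1$ to $u_2$ keeps that player on the same side, so the configurations on $wu_1$ and $wu_2$ differ exactly by such a move; since the strategies are deterministic, in each configuration each player announces at a uniquely determined time (possibly never). The key sublemma I would prove first is: \emph{if on the edge $e_1=wu_1$ the player at $w$ does not announce before the player at $u_1$ — so $e_1$ has an outward component at $w$, with label $n_1$ equal to the time the player at $u_1$ answers — then on $e_2=wu_2$ the player at $w$ answers no later than time $n_1-1$, hence $\mathrm{label}(e_2)<n_1=\mathrm{label}(e_1)$.} This is immediate from Lemma~\ref{lem:edge_guessed} with the player at $u_1$ in the role of the guesser and the player at $w$ in the role of the silent player, choosing $u_2$ as ``a different vertex adjacent to'' the silent player; the resulting configuration is exactly that of $e_2$.

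From the sublemma both the monotonicity and the degree bound drop out. If a vertex $w$ had two incident edges $e_1,e_2$ each with an outward component at $w$, the sublemma applied with the roles of $e_1,e_2$ interchanged would give $\mathrm{label}(e_2)<\mathrm{label}(e_1)$ and $\mathrm{label}(e_1)<\mathrm{label}(e_2)$, a contradiction; hence every vertex is incident to at most one edge with an outward component, i.e.\ has out-degree at most $1$ when a bidirectional edge is counted as outgoing at both endpoints. Next, taking $e_1$ to be that unique outgoing edge at $w$ and $e_2$ any other incident edge — necessarily pointing strictly into $w$ — the sublemma gives $\mathrm{label}(e_2)<\mathrm{label}(e_1)$; iterating along a directed path shows the labels strictly increase, and it also shows a directed path cannot traverse a bidirectional edge and then continue, since the middle vertex would then have two outgoing edges.

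For the global picture I would count out-degrees. Let $b$ be the number of bidirectional edges; summing out-degrees over the $n$ vertices of the tree gives $\big((n-1)-b\big)\cdot 1+b\cdot 2=n-1+b$, and since each out-degree is at most $1$ this is $\le n$, so $b\le 1$ — already the assertion that a bidirectional edge occurs only in the exceptional configuration. If $b=0$ the sum equals $n-1$, so exactly one vertex has out-degree $0$: it is the unique vertex all of whose incident edges point inward, and every other vertex has exactly one outgoing edge. If $b=1$ the sum equals $n$, so every vertex has out-degree exactly $1$; the two endpoints of the bidirectional edge have it as their only outgoing edge, so all their remaining incident edges point inward, whereas any other vertex has a strict outgoing edge and so is not of this type; thus there are exactly two vertices with all incident edges inward and they are joined by the bidirectional edge, which is the only one. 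The degenerate case where $G$ is a single edge is checked directly.

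Finally, for ``variant of cutting off leaves'': the orientation just obtained is a union of one (resp.\ two) trees with all edges oriented toward a root, joined in the two-root case by the bidirectional edge between the roots. To identify this with the leaf-peeling procedure I would apply Lemma~\ref{lem:edge_guessed} with $n=1$, which shows that a player can answer at the very first opportunity only when the vertex they see is a leaf, and then the answer is forced. An induction on $|V(G)|$ then finishes: at the first step exactly the leaf-incident edges get resolved, oriented away from the leaf (or bidirectional if $G$ is a single edge), after which both players correctly infer that they are not on a leaf and the situation recurs on $G$ with its leaves deleted; so any sound terminating strategy agrees step by step with cutting off leaves, the only freedom being the simultaneous/alternating bookkeeping absorbed by the word ``variant''. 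Since a player who may not discuss beforehand must still commit to a sound terminating strategy, each of them independently ends up playing cutting off leaves. The step I expect to be most delicate is the sublemma together with the out-degree bound — in particular, keeping the treatment of bidirectional edges consistent with the notion of ``outward component''; the counting in the third paragraph and the final identification with leaf-peeling are then routine.
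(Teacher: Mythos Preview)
Your sublemma and the out-degree bound derived from it are exactly the content the paper extracts from Lemma~\ref{lem:edge_guessed}, and your counting argument for the global structure ($b\le 1$, then splitting into the cases $b=0$ and $b=1$) is a clean alternative to the paper's argument via unextendible directed paths. So the first three paragraphs are correct and essentially parallel the paper.

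The final paragraph overclaims. The assertion that ``at the first step exactly the leaf-incident edges get resolved'' and that ``any sound terminating strategy agrees step by step with cutting off leaves'' is false for arbitrary correct strategies: a sound strategy may delay --- see the Remark after the theorem and Proposition~\ref{prop:strategy_both}, which explicitly builds a strategy with all labels distinct and hence much larger than the leaf-peeling labels. Consequently your induction step (``both players correctly infer that they are not on a leaf and the situation recurs on $G$ with its leaves deleted'') breaks down: if some leaf is \emph{not} resolved at time~$1$, the other player cannot exclude being on that leaf. The paper avoids this by reading the ``variant'' claim directly off the structure you have already established: since at any vertex the unique outgoing edge has the largest label among incident edges, an edge with label $n$ has its tail as a leaf of the subgraph $\{e:\mathrm{label}(e)\ge n\}$; hence at time $n$ only current leaves are guessed. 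That is all ``variant of cutting off leaves'' means --- not that every strategy removes \emph{all} leaves at each step. For the ``no discussion'' part, the argument is the sandwich: each player can at least carry out the leaf-peeling deductions (lower bound on speed), and by your structural result no correct strategy can be faster (upper bound), so the undiscussed play coincides with cutting off leaves. Your last sentence (``must still commit to a sound terminating strategy, \dots\ ends up playing cutting off leaves'') elides the lower-bound half of this sandwich.
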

\begin{proof}
That labels are strictly increasing along directed paths follows directly from Lemma~\ref{lem:edge_guessed}. Of course, in the case they speak alternately, the label corresponding to an edge pointing to the player speaking at odd or even time can only be odd or even, respectively.

The endpoint $v$ of an unextendible directed path is defined as a vertex with all incident edges going inward. By Lemma~\ref{lem:edge_guessed}, any path from a point $v^\prime \neq v$ to $v$ is directed to $v$. If no incident edge has two directions, $v$ is the only vertex with this property. If there is such one, the other vertex $v^*$ incident to this edge also has the property of having all incident edges ingoing.

Lemma~\ref{lem:edge_guessed} shows that if one edge incident to a vertex is outgoing, all other edges incident to this vertex have to be ingoing and cannot be outgoing. In particular, bidirected edges can only connect vertices with all incident edges going inward. No other adjacent edge can have two directions. It follows that $v$ and $v^*$ are the only vertices with all incident edges being ingoing, and the edge incident to both is the only one with two directions.

We directly see that in time step $n$ only leaves of the graph consisting of edges with labels greater or equal than $n$ are guessed, and the player on the interior endpoint says the correct answer (unless the updated graph consists of only one edge by this point and we cannot speak about the interior). Thus, any strategy is a variant of cutting off leaves, and the latter one is the fastest one among all strategies. Since both players can at least cut off all leaves of the corresponding set $V_A$ or $V_B$ in their step when they do not discuss before, cutting off leaves is exactly the strategy they come up with if they have not discussed before.
\end{proof}

\begin{example}

Figure~\ref{fig:simultaneously} and Figure~\ref{fig:alternately} visualize the strategy of cutting of leaves when speaking simultaneously or alternately, respectively, in the notation of Theorem~\ref{thm:main_strategy}. White vertices correspond to $V_A$ and black to $V_B$. Player $A$ starts and talks at odd time, player $B$ at even time.

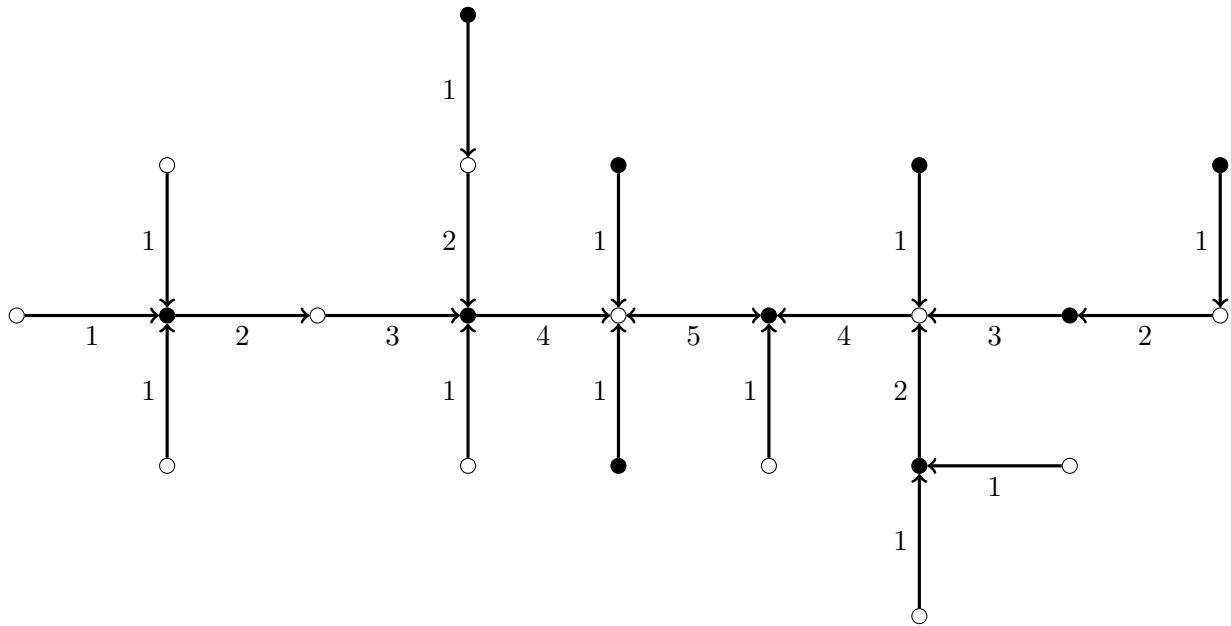
\begin{figure}[htbp]
\begin{center}
\beginpgfgraphicnamed{simultaneously}
\begin{tikzpicture}
[white/.style={circle,draw=black,fill=white,thin,inner sep=0pt,minimum size=2.0mm},
black/.style={circle,draw=black,fill=black,thin,inner sep=0pt,minimum size=2.0mm},scale=2]
\node[white] (w1)
at (2,-2) {};
\node[white] (w2)
 at (-3,-1) {};
\node[white] (w3)
 at (-1,-1) {};
\node[white] (w4)
 at (1,-1) {};
\node[white] (w5)
 at (3,-1) {};
\node[white] (w6)
 at (-4,0) {};
\node[white] (w7)
 at (-2,0) {};
\node[white] (w8)
 at (0,0) {};
\node[white] (w9)
 at (2,0) {};
\node[white] (w10)
 at (4,0) {};
\node[white] (w11)
 at (-3,1) {};
\node[white] (w12)
 at (-1,1) {};
\node[black] (b1)
 at (0,-1) {};
\node[black] (b2)
 at (2,-1) {};
\node[black] (b3)
 at (-3,0) {};
\node[black] (b4)
 at (-1,0) {};
\node[black] (b5)
 at (1,0) {};
\node[black] (b6)
 at (3,0) {};
\node[black] (b7)
 at (0,1) {};
\node[black] (b8)
 at (2,1) {};
\node[black] (b9)
 at (4,1) {};
\node[black] (b10)
 at (-1,2) {};
\path[->,very thick]
	(w5) edge node[midway,below] {$1$} (b2)
	(w6) edge node[midway,below] {$1$} (b3)
	(b3) edge node[midway,below] {$2$} (w7)
	(w7) edge node[midway,below] {$3$} (b4)
	(b4) edge node[midway,below] {$4$} (w8)
	(w10) edge node[midway,below] {$2$} (b6)
	(b6) edge node[midway,below] {$3$} (w9)
	(w9) edge node[midway,below] {$4$} (b5)
	(w2) edge node[midway,left] {$1$} (b3)
	(w11) edge node[midway,left] {$1$} (b3)
	(w3) edge node[midway,left] {$1$} (b4)
	(b10) edge node[midway,left] {$1$} (w12)
	(w12) edge node[midway,left] {$2$} (b4)
	(b1) edge node[midway,left] {$1$} (w8)
	(b7) edge node[midway,left] {$1$} (w8)
	(w4) edge node[midway,left] {$1$} (b5)
	(w1) edge node[midway,left] {$1$} (b2)
	(b2) edge node[midway,left] {$2$} (w9)
	(b8) edge node[midway,left] {$1$} (w9)
	(b9) edge node[midway,left] {$1$} (w10);
\path[<->,very thick]
	(w8) edge node[midway,below] {$5$} (b5);
\end{tikzpicture}
\endpgfgraphicnamed
\caption{Notation of Theorem~\ref{thm:main_strategy} for cutting off leaves when speaking simultaneously}
\label{fig:simultaneously}
\end{center}
\end{figure}

\begin{figure}[htbp]
\begin{center}
\beginpgfgraphicnamed{simultaneously}
\begin{tikzpicture}
[white/.style={circle,draw=black,fill=white,thin,inner sep=0pt,minimum size=2.0mm},
black/.style={circle,draw=black,fill=black,thin,inner sep=0pt,minimum size=2.0mm},scale=2]
\node[white] (w1)
at (2,-2) {};
\node[white] (w2)
 at (-3,-1) {};
\node[white] (w3)
 at (-1,-1) {};
\node[white] (w4)
 at (1,-1) {};
\node[white] (w5)
 at (3,-1) {};
\node[white] (w6)
 at (-4,0) {};
\node[white] (w7)
 at (-2,0) {};
\node[white] (w8)
 at (0,0) {};
\node[white] (w9)
 at (2,0) {};
\node[white] (w10)
 at (4,0) {};
\node[white] (w11)
 at (-3,1) {};
\node[white] (w12)
 at (-1,1) {};
\node[black] (b1)
 at (0,-1) {};
\node[black] (b2)
 at (2,-1) {};
\node[black] (b3)
 at (-3,0) {};
\node[black] (b4)
 at (-1,0) {};
\node[black] (b5)
 at (1,0) {};
\node[black] (b6)
 at (3,0) {};
\node[black] (b7)
 at (0,1) {};
\node[black] (b8)
 at (2,1) {};
\node[black] (b9)
 at (4,1) {};
\node[black] (b10)
 at (-1,2) {};
\path[->,very thick]
	(w5) edge node[midway,below] {$2$} (b2)
	(w6) edge node[midway,below] {$2$} (b3)
	(b3) edge node[midway,below] {$3$} (w7)
	(w7) edge node[midway,below] {$4$} (b4)
	(b4) edge node[midway,below] {$5$} (w8)
	(b5) edge node[midway,below] {$5$} (w8)
	(w10) edge node[midway,below] {$2$} (b6)
	(b6) edge node[midway,below] {$3$} (w9)
	(w9) edge node[midway,below] {$4$} (b5)
	(w2) edge node[midway,left] {$2$} (b3)
	(w11) edge node[midway,left] {$2$} (b3)
	(w3) edge node[midway,left] {$2$} (b4)
	(b10) edge node[midway,left] {$1$} (w12)
	(w12) edge node[midway,left] {$2$} (b4)
	(b1) edge node[midway,left] {$1$} (w8)
	(b7) edge node[midway,left] {$1$} (w8)
	(w4) edge node[midway,left] {$2$} (b5)
	(w1) edge node[midway,left] {$2$} (b2)
	(b2) edge node[midway,left] {$3$} (w9)
	(b8) edge node[midway,left] {$1$} (w9)
	(b9) edge node[midway,left] {$1$} (w10);
\end{tikzpicture}
\endpgfgraphicnamed
\caption{Notation of Theorem~\ref{thm:main_strategy} for cutting off leaves when speaking alternately}
\label{fig:alternately}
\end{center}
\end{figure}
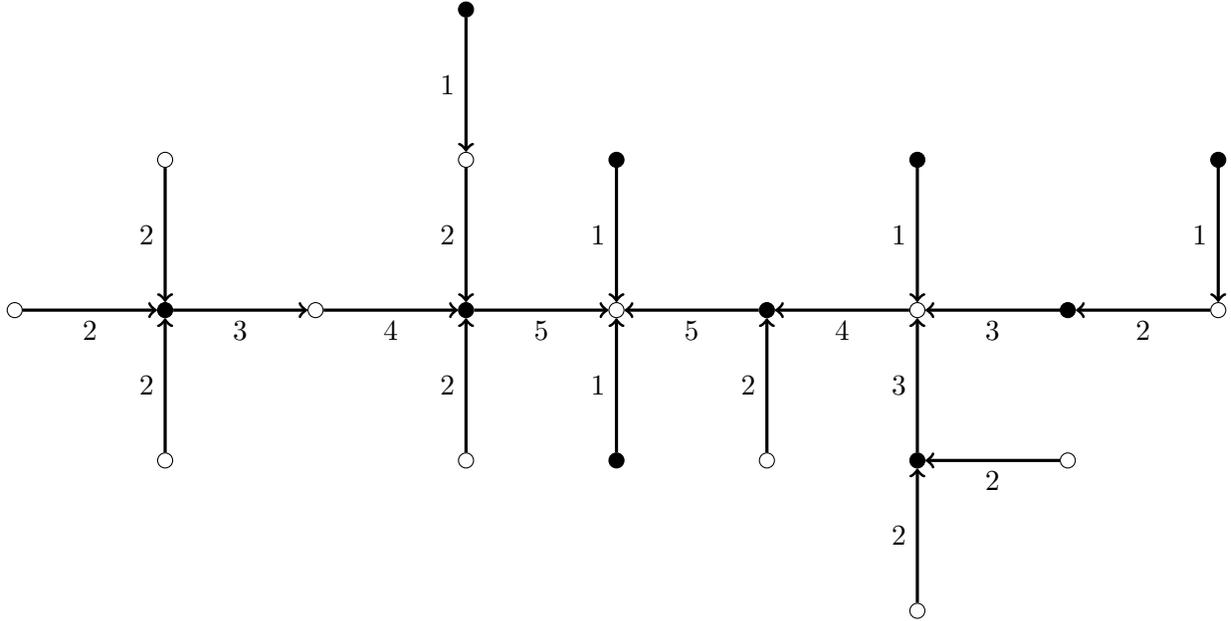

\end{example}
We can summarize the difference between the cases where the players speak simultaneously or alternatively, as follows:
\begin{itemize}
\item If the game is played simultaneously: At each time step, $G$ is updated by removing all leaves of $G$.
\item If the game is played alternately: After each odd time step, $G$ is updated by removing all leaves of $G$ that are in $V_B$, whereas after each even one, all vertices of $V_A$ that are leaves of $G$ get removed.
  \end{itemize}
\begin{remark}
Suppose $G$ is a directed tree equipped with a labeling fulfilling the statement of Theorem~\ref{thm:main_strategy}, respecting the parity in the case the players speak alternately, meaning that the starting player can only guess their position at odd and the other one at even times. Then this labeling corresponds to a strategy: Player $A$ ($B$) looks at all edges incident to the position of $B$ ($A$), and if an edge with label $n$ is pointing away, they indicate at time $n$ the endpoint of this edge unless $B$ ($A$) has said their position before (we do not specify yet what they will say after the other player has told their position, assuming for the moment the game stops at this point). Note that  there is at most one such edge, since at most one edge incident to a vertex is outgoing. Since each edge is directed, at least the player on the vertex where the edge is ingoing will say the answer after some time. The strategy is correct, since by construction, all ingoing edges to the position of $B$ ($A$) are 
guessed by $B$ ($A$) themselves before.
\end{remark}

Finally, note that if $A$ and $B$ can discuss before, the cases when speaking simultaneously or alternately are essentially equivalent.

\begin{proposition}\label{prop:independent}
If both players can discuss a strategy before the start of the game, the set of edges of the graph they can find out correctly is the same independent of whether they speak simultaneously or alternately in each time step.
\end{proposition}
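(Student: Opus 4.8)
The plan is a bidirectional simulation: every correct strategy in one of the two speaking modes will be rewritten as a correct strategy in the other mode under which the game, for any fixed edge with its two players placed on it, terminates if and only if it terminated before. Here ``correct strategy'' is used as in the hypotheses --- a player speaks only at a time when its information, namely the partner's position together with the full record of who said what at the earlier times, determines its own position uniquely --- and I use freely that a player is always allowed to remain silent and to ignore part of the information it holds. Since under a correct strategy the first player to speak is automatically right, the phrase ``the players can find out an edge'' means exactly that, under some correct strategy, the game on that edge does not go on forever; hence it is enough to match the terminating plays across the two modes.

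First I would pass from the simultaneous mode to the alternate one. Given correct simultaneous strategies for $A$ and $B$, let $A$ be the starting player and split each ``virtual'' simultaneous step into two consecutive alternate steps: at alternate time $2s-1$ player $A$ does what it would do at simultaneous time $s$, and at alternate time $2s$ player $B$ does what it would do at simultaneous time $s$, where in both cases a statement made by the partner at simultaneous time $r$ is read off the alternate step $2r$. The thing to check is that each player, when called to move, knows everything its simultaneous rule uses: at alternate time $2s-1$ player $A$ has heard exactly $B$'s statements at alternate times $2,4,\dots,2s-2$, i.e.\ $B$'s first $s-1$ simultaneous statements, which is all that $A$'s decision at simultaneous time $s$ may depend on; symmetrically, at alternate time $2s$ player $B$ has heard $A$'s statements up to alternate time $2s-1$ and simply disregards the last one --- and anyway, had $A$ spoken at time $2s-1$ the game would already be over. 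Thus the alternate play reproduces the simultaneous play with time doubled; it is correct, and it terminates for precisely the same edges.

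Conversely, I would pass from the alternate mode to the simultaneous one. Given correct alternate strategies with $A$ starting, let $A$ in the simultaneous game speak only at odd times, acting there exactly as in the alternate game, and keep silent at even times, and symmetrically let $B$ speak only at even times and keep silent at odd times. Because $B$ is then always silent on odd steps and $A$ always silent on even steps, the record visible to whichever player moves at time $t$ is literally the record it would see at time $t$ in the alternate game, so the simultaneous play copies the alternate play step by step; it is again correct and terminates on the same edges. (The choice of who starts the alternate game is immaterial here, since the simultaneous game is symmetric in $A$ and $B$.) Putting the two constructions together shows that the set of edges the players can find out does not depend on the speaking mode.

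The hard part will be the bookkeeping that makes ``correct'' survive each translation: one must verify that replacing simultaneous speech by turn-taking, and conversely, never forces a player to commit to a guess using information it does not yet have, nor lets it profit from information it has but should not use. This is exactly what the parity counts above are for --- in each direction the information set of the acting player is shown to coincide with, or harmlessly to contain, the information set it relied on in the original mode.
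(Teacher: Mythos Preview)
Your proof is correct and follows essentially the same route as the paper: for the simultaneous-to-alternate direction you split each simultaneous round into two alternate steps with $B$ ignoring $A$'s immediately preceding move, and for the alternate-to-simultaneous direction you have the off-turn player stay silent. The paper's argument is the same idea stated more tersely; your explicit time indexing and verification of the information sets is a careful elaboration of exactly that sketch.
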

\begin{proof}
Suppose the players have a strategy if they speak simultaneously. They can adapt it to the alternative case as follows: they agree before that player $A$ speaks in each odd and player $B$ in each even time step. Moreover, $B$ ignores the information they just got from the immediately preceding round of $A$ just until after their own turn is over.

Now suppose they have a strategy when they speak alternately. Then in each time step, when one player would remain silent, the same says nothing when they are allowed to talk at the same time. This yields an equivalent strategy for the case of simultaneously speaking.
\end{proof}


\subsection{Who is first?}\label{ssec:time}

\begin{proposition}\label{prop:strategy_both}
For any tree $G=(V,E)$, there is a strategy allowing both players to determine their position.
\end{proposition}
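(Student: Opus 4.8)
The plan is to start from the cutting-off-leaves strategy of Theorem~\ref{thm:tree_only}, which already forces at least one player to announce, and then to append a short ``hand-off'' phase in which that player reveals the other player's position purely through the \emph{timing} of repeated announcements of their own vertex. Since the players may fix a strategy in advance, they may also agree in advance on a fixed linear order of $V$.

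First I would consider the first time step $n$ at which cutting-off-leaves produces an announcement. If both players announce at time $n$ --- this is exactly the bidirected-edge case of Theorem~\ref{thm:main_strategy} --- we are already done. Otherwise exactly one player, say $A$ on vertex $u$, announces, while $B$ has heard only silence before time $n$. The key point is that $B$ can compute the set $S$ of all vertices $w$ that are adjacent to $u$ and have the property that, were $B$ placed on $w$, cutting-off-leaves would make $A$ announce at exactly time $n$ while $B$ stays silent; this set depends only on the data $G$, $u$, $n$, all of which are known to both players. The true vertex $v$ of $B$ lies in $S$, and $B$ knows $v\in S$ but in general not which element it is, whereas $A$, who sees $B$, knows precisely which element of $S$ is the true one.

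Next comes the hand-off phase. Having announced $u$ at time $n$, player $A$ stays silent and then re-announces $u$ at time $n+i$, where $i$ is the rank of $v$ in the agreed order on $S$; player $B$ stays silent until this second announcement, reads off $i$, and announces $v$. (In the alternating variant one uses only $A$'s turns for the counting; alternatively one may restrict attention to the simultaneous variant from the outset.) It then remains to verify that the protocol is legal and correct: $A$ only ever announces $u$; $B$'s single announcement is $v$, because $B$ decodes $i$ correctly; and appending the hand-off does not disturb the earlier analysis, since $A$'s first announcement is still forced to time $n$ and $B$'s silence before time $n$ is still forced. Hence both players determine their positions.

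The step I expect to be the main obstacle is making the set $S$ precise and proving the two facts the decoding relies on: that $v\in S$, and that --- conditioned on everything $B$ has observed --- $S$ is exactly $B$'s set of residual possibilities, so that the timing code is unambiguous, with the true value being the vertex $A$ sees. This needs some care about vertices removed at earlier steps and, in the alternating game, about the bipartition classes $V_A$, $V_B$; once $S$ is pinned down, the timing code itself is trivial, since $S$ is finite and its order is common knowledge.
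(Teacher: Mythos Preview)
Your approach is correct and genuinely different from the paper's. The paper never uses a second announcement: instead it takes the cutting-off-leaves labeling of Theorem~\ref{thm:main_strategy}, multiplies every label by $|E|$, and then adds a distinct offset in $\{0,\ldots,|E|-1\}$ to each edge (with a further parity fix in the alternating case). The result is still a labeling satisfying the monotonicity of Theorem~\ref{thm:main_strategy}, hence a legal single-announcement strategy, but now all labels are distinct, so the time of the \emph{first} announcement already identifies the edge and the second player can read off their vertex immediately. Your two-phase protocol trades this off the other way: the first announcement comes at the unmodified cutting-off-leaves time, and the identification is pushed into a hand-off phase via a second announcement. Both ideas exploit that timing carries information; the paper encodes it in one shot at the cost of inflating all labels by a factor of $|E|$, while you keep the labels and pay up to $\deg(u)$ extra steps afterwards.

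The obstacle you flag is not serious. In the simultaneous game, the time at which a player announces under cutting-off-leaves depends only on the \emph{other} player's vertex (it is one plus the number of global leaf-removal rounds after which that vertex becomes a leaf). Hence $B$'s own silence through time $n$ is already forced by $u$ alone, and the residual set is exactly
\[
S=\{\,w\sim u:\ w\text{ becomes a leaf after exactly }n-1\text{ rounds of leaf removal}\,\},
\]
which both players compute from $G$, $u$, $n$; clearly $v\in S$, and $A$ (who sees $v$) picks the rank $i$ in the agreed order. For the alternating variant you are right that Proposition~\ref{prop:independent} lets you reduce to the simultaneous case, so no separate argument is needed.
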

\begin{proof}
Direct and label $G$ according to Theorem~\ref{thm:main_strategy} and the cutting off leaves strategy. Now multiply all labels by $|E|$. Establish a bijection $\phi$ between $E$ and the set $\left\{0,1,\ldots,|E|-1\right\}$. Now for each $e\in E$ add $ \phi(e)$ to the label of $e$. In the case they speak alternately, multiply the resulting number by $2$. Additionally, subtract $1$ if and only if the edge is pointing to the starting player. We obtain a labeling of $G$ fulfilling the statement of Theorem~\ref{thm:main_strategy} with all labels being different. By the remark after Theorem~\ref{thm:main_strategy}, the labeling corresponds to a strategy. Since all labels are different, the player who does not say the answer first can figure out their position correctly by the time the other one needed. Thus, both players can find out their position.
\end{proof}

Because both players can figure out their position with the right strategy by Proposition~\ref{prop:strategy_both}, we now consider the case they do not talk about a strategy before. We will discuss whether both players can get the right answer and how long the first player needs. This also gives an lower bound for the case of general strategies by Theorem~\ref{thm:main_strategy}.

\begin{theorem}\label{th:who_is_first}
Let $G=(V,E)$ be a tree and assume that both players play without discussing about a strategy before. Let $h_B(A)$ be the height of $A$ in the tree rooted in $B$ and $h_A(B)$ the height of $B$ in the tree rooted in $A$. Let $h^\prime_B(A):=h_B(A)+1$ if $h_B(A)$ is odd, and $h^\prime_B(A):=h_B(A)$ otherwise. In the same way, let $h^\prime_A(B):=h_A(B)+1$ if $h_A(B)$ is even, and $h^\prime_A(B):=h_A(B)$ otherwise.
\begin{enumerate}
\item Suppose they speak simultaneously in each step. If $h_B(A)>h_A(B)$, player $A$ will first know their position at time $h_A(B)$; if $h_B(A)=h_A(B)$, both of them will figure out their position at the same time $h_B(A)=h_A(B)$; if $h_B(A)<h_A(B)$, player $B$ says the answer first at time $h_B(A)$.

In the case that $h_B(A)>h_A(B)$, $B$ will figure out their position at time $h_A(B)+1$ as well if and only if there is no node other than $B$ in the tree rooted in $A$ with height $h_A(B)$. The analogous statement is true for the case $h_B(A)<h_A(B)$.

\item Suppose they speak alternately in each step, $A$ being the starting player. If $h^\prime_B(A)>h^\prime_A(B)$, player $A$ will first know their position at time $h^\prime_A(B)$; if $h^\prime_B(A)<h^\prime_A(B)$, player $B$ says the answer first at time $h^\prime_B(A)$.

In the case that $h^\prime_B(A)>h^\prime_A(B)$, $B$ will figure out their position at time $h^\prime_A(B)+1$ as well if and only if there is no node other than $B$ in the tree rooted in $A$ with height $h^\prime_A(B)$ or $h^\prime_A(B)-1$. The analogous statement is true for the case $h^\prime_B(A)<h^\prime_A(B)$.
\end{enumerate}
\end{theorem}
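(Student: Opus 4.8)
The plan is to reduce both parts to the explicit dynamics of \emph{cutting off leaves}, which by Theorem~\ref{thm:main_strategy} is precisely the strategy each player is forced into when nothing was agreed upon in advance. Set $G_0:=G$ and let $G_t$ be obtained from $G_{t-1}$ by deleting all of its leaves in the simultaneous game, and by deleting only the $V_B$-leaves after an odd round and only the $V_A$-leaves after an even round in the alternating game. (We take the height of a single vertex to be $1$, so that $h_w(v)$ counts the vertices on a longest descending path from $v$ in the tree rooted at $w$; this is the convention under which the figures are consistent.) The argument rests on two ingredients. The combinatorial one: for a vertex $v$ adjacent to $w$, the branch of $G$ hanging off $v$ on the side away from $w$ is a tree whose deepest leaves are stripped first, so $v$ acquires degree $1$ in the pruned graph for the first time after exactly $h_w(v)-1$ rounds, with $w$ as its surviving neighbour, provided $w$ has not itself disappeared by then, which amounts to $h_v(w)\ge h_w(v)$. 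The epistemic one, to be proved by induction on $t$: as long as neither player has yet spoken, after $t$ rounds of mutual silence the positions that the player facing $w$ still considers possible for themselves are exactly the neighbours of $w$ in $G_t$.

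The epistemic induction is short. For $t=0$ the candidate set is $N(w)$, and in the step the contrapositive of Lemma~\ref{lem:edge_guessed} says that a position $v'\in N(w)$ is still possible after round $t$ precisely when, placed there, neither player would have spoken in rounds $1,\dots,t$; applying the inductive hypothesis to both players, this is equivalent to the edge $v'w$ surviving $t$ rounds of pruning, i.e.\ $v'\in N_{G_t}(w)$. Combined with the combinatorial ingredient, the player facing $w$ names their position exactly one step after $w$ becomes a leaf of the pruned graph, that is at time $h_A(B)$ for player $A$ and at time $h_B(A)$ for player $B$. Comparing these two numbers gives the trichotomy of part~(i); in the equality case $h_A(B)=h_B(A)=:h$ one checks that $\alpha$ and $\beta$ become leaves of $G_{h-1}$ simultaneously, so $G_{h-1}$ is the single edge $\alpha\beta$ (the central edge) and both speak at time $h$. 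For part~(ii) the only new phenomenon is parity: a $V_B$-vertex can only be stripped after an odd round and a $V_A$-vertex only after an even one, which delays the moment $\beta$ becomes a leaf to an even round and the moment $\alpha$ becomes a leaf to an odd round. Tracking this delay along the longest descending path shows it is uniformly $0$ or $1$, so the first-speaking times become $h_A(B)$ rounded up to an odd integer, namely $h'_A(B)$, and $h_B(A)$ rounded up to an even integer, namely $h'_B(A)$. Since one of these is always odd and the other always even they are never equal, which is why part~(ii) has no middle case.

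It remains to decide when the player who speaks second also succeeds. Assume $h_B(A)>h_A(B)$, so $A$ speaks at time $s:=h_A(B)$ and afterwards $B$ still faces $\alpha$. The decisive point is that $A$'s statement transmits a \emph{time}: had $B$ actually been at another neighbour $y'$ of $\alpha$, player $A$ — then seeing $y'$ — would have named their position at time equal to the height of $y'$ in the tree rooted at $\alpha$ (by the same computation giving $A$'s time as $h_A(B)$ when $B$ is really at $\beta$), and because $s<h_B(A)$ this first statement genuinely falls at that time for every $y'$ whose height is at most $s$. Hence after time $s$ the positions still consistent with everything $B$ has heard are exactly the neighbours $y'$ of $\alpha$ with that height equal to $s$, so $B$ succeeds — and, since before time $s$ plain silence still leaves $\beta$ together with a deepest neighbour of $\alpha$ (which exists as $h_B(A)>h_A(B)$) undecided, succeeds for the first time at $s+1=h_A(B)+1$ — if and only if that set is $\{\beta\}$, i.e.\ $\beta$ is the only neighbour of $\alpha$ whose branch has height $h_A(B)$ (a vertex of that height lying deeper than a neighbour of $\alpha$ automatically sits inside a branch of strictly larger height, hence is irrelevant). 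In the alternating game the argument is identical, except that $B$ only learns that $A$ spoke at the odd time $h'_A(B)$, which is consistent with $A$'s un-rounded time being $h'_A(B)$ or $h'_A(B)-1$; this is the source of the two admissible heights there. I expect the main technical work to be the epistemic induction — making rigorous the equivalence between ``nobody has spoken yet'' and ``the edge still survives the pruning'', including the care needed once the first announcement has occurred — and, in the alternating setting, the bookkeeping confirming that the parity constraint propagates through the pruning exactly as the rounding in the definition of $h'$ predicts.
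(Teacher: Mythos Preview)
Your proposal is correct and follows the same route as the paper: both deduce the first-speaking time from the leaf-cutting dynamics of Theorem~\ref{thm:main_strategy}, identifying it with the shorter of the two maximal paths through the edge $AB$, and both handle the alternating case by a parity adjustment; you unpack this via an explicit pruning sequence $G_t$ and an epistemic induction where the paper simply reads the answer off the edge labelling of Theorem~\ref{thm:main_strategy}, but the content is identical. Your analysis of the second speaker is if anything more careful than the paper's, correctly isolating the residual ambiguity as the set of \emph{neighbours} of $\alpha$ of height exactly $h_A(B)$ (respectively $h'_A(B)$ or $h'_A(B)-1$ in the alternating case), which is precisely what the paper's edge-label argument yields before its looser final rephrasing in terms of arbitrary nodes.
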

\begin{proof}
(i) Consider the longest simple path starting in $A$ going through $B$ and the longest simple path starting in $B$ going through $A$. By Theorem~\ref{thm:main_strategy}, the shorter of them will determine the time until one player knows the answer. If both have the same length, both players will know the answer at the same time (the corresponding edge has two directions), otherwise the edge is pointing to the player being on the boundary of the shortest path. The lengths of these paths are given by $h_A(B)$ and $h_B(A)$, respectively.

For the second part of the statement, assume without loss of generality that $A$ knows the answer first at time $n$. Then $B$ can figure out their position as well if and only if the edge incident to $A$ and $B$ is the only edge incident to $A$ with label $h_A(B)$ (note that any edge going out from $A$ has a label greater than $h_A(B)$ by Lemma~\ref{lem:edge_guessed}). This is the case if and only if there is no node other than $B$ in the tree rooted in $A$ with height $h_A(B)$.

(ii) In the same way as in (i), the label on the edge connecting $A$ and $B$ is given by the smaller of $h^\prime_B(A)$ and $h^\prime_A(B)$. The possible difference of $1$ between $h^\prime$ and $h$ is due to the fact that $A$ can say the answer at odd and $B$ at even times only. The edge is directed to $A$ if $h^\prime_A(B)$ is smaller, otherwise it is directed to $B$.

Without loss of generality, assume $A$ knows the answer first. As above, $B$ can figure out their position as well if and only if the edge incident to $A$ and $B$ is the only edge incident to $A$ with label $h^\prime_A(B)$. Remembering that $A$ can answer at odd times only, this is only the case if and only if there is no node other than $B$ in the tree rooted in $A$ with height $h^\prime_A(B)$ or $h^\prime_A(B)-1$. In the first case, the leaf determining the height of $B$ is in $V_B$, in the second case in $V_A$. As before, $V$ is partitioned into $V_A$ and $V_B$ such that each edge is incident to points in both sets and $A \in V_A$.
\end{proof}

\section{Two-player game on directed graphs}\label{sec:directed}

Let $G=(V,E)$ be a finite directed graph. The players $A$ and $B$ are placed on an edge in $E$. For simplicity, we identify the players with the vertices they are placed on. Now both try to figure out on which position they are knowing only $G$, the vertex of the other and the orientation of the edge.

In the following, we are only considering simple directed graphs if we speak about a directed graph. Note that the  graphs in Section~\ref{sec:undirected} occur as a special case, when any two adjacent vertices are connected by two edges with different orientation. Since the ideas are very similar, we will often refer to Section~\ref{sec:undirected} for proofs of analogous statements.

\begin{definition}
An edge together with a placement of $A$ and $B$ on the endpoints is called \textit{admissible}, if the orientation of the edge agrees with the orientation the players got assigned before.
\end{definition}

If the placement is clear (for example, when the position of one player is given or for vertex sets associated to $A$ or $B$), we will not give the details.

The following analogue of Lemma~\ref{lem:edge_guessed} is shown in the same way.

\begin{lemma}\label{lem:edge_guessed_directed}
Suppose both players have a strategy such that they are knowing the right answer when they say so. Assume $A$ guesses their position after $n$ time steps where $B$ has not said anything before time $n$. Then if $A$ had been placed on a vertex $A^\prime \neq A$ adjacent to $B$, such that the edge $(A^\prime,B)$ is admissible, $B$ would have guessed their position correctly no later than time $n-1$.
\end{lemma}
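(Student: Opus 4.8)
The plan is to argue by contradiction, imitating the proof of Lemma~\ref{lem:edge_guessed} almost verbatim. Suppose the conclusion fails: when $A$ is placed on $A'$ (with the edge $(A',B)$ admissible), player $B$ does not guess their position at any time $\le n-1$. The key observation is that the only data feeding $A$'s decision at time $n$ are the graph $G$ (which is common knowledge), the vertex occupied by $B$, the orientation of the edge between $A$'s position and $B$ as seen from $A$, and the record of $B$'s utterances through time $n-1$.

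In the original situation $B$ is silent through time $n-1$ by hypothesis, and in the modified situation $B$ is silent through time $n-1$ by our contradiction assumption; the vertex occupied by $B$ is the same in both. Admissibility of $(A',B)$ guarantees that $A$ perceives the same orientation of its edge toward $B$ in both scenarios — this is precisely the role of the admissibility hypothesis and the only new ingredient compared with the undirected case. Hence $A$'s information is identical in the two scenarios up to and including the moment of its time-$n$ move, so its (correct) strategy must output the same answer in both. In the original scenario that answer is the vertex $A$, and it is correct; therefore in the modified scenario $A$ also announces the vertex $A$ at time $n$, even though there $A$ actually sits on $A' \neq A$ — contradicting that $A$ announces only when it knows the right answer.

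The only point requiring a word of care is the timing convention: whether the players speak simultaneously or alternately, $A$'s statement at time $n$ can depend on $B$'s behaviour only through time $n-1$ (in the simultaneous case $A$ has not yet heard $B$'s time-$n$ move when it commits; in the alternate case the last move $A$ observes is earlier still), so the ``identical information'' argument goes through unchanged in both variants, exactly as for Lemma~\ref{lem:edge_guessed}. I do not expect any real obstacle here — the whole content of the directed version is the insertion of the admissibility condition to keep the perceived edge orientation fixed across the two placements.
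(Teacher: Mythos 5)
Your proof is correct and follows essentially the same route as the paper, which proves this lemma simply by noting it is shown in the same way as Lemma~\ref{lem:edge_guessed}: assume the contrary, observe that $A$ has identical information (position of $B$, orientation of the shared edge, and $B$'s silence through time $n-1$) in both placements, and derive a contradiction with the correctness of $A$'s strategy. Your explicit remark that admissibility is exactly what keeps the perceived edge orientation fixed across the two placements is the right identification of the only new ingredient in the directed setting.
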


To transfer the theorems of the previous section to the case of directed graphs, we need a new concept of paths, see for example Figure~\ref{fig:zigzag}.

\begin{definition}
A \textit{zig-zag-path} is a path, where at each interior vertex either both incident edges are ingoing or both are outgoing. If the path is closed, we call it a \textit{zig-zag-cycle}.
\end{definition}

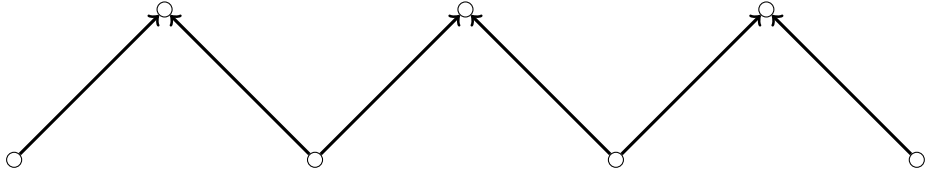
\begin{figure}[htbp]
\begin{center}
\beginpgfgraphicnamed{zigzag}
\begin{tikzpicture}
[white/.style={circle,draw=black,fill=white,thin,inner sep=0pt,minimum size=2.0mm}]
\node[white] (w1)
at (-6,-1) {};
\node[white] (w2)
 at (-4,1) {};
\node[white] (w3)
 at (-2,-1) {};
\node[white] (w4)
 at (0,1) {};
\node[white] (w5)
 at (2,-1) {};
\node[white] (w6)
 at (4,1) {};
\node[white] (w7)
 at (6,-1) {};
\path[->,very thick]
(w1) edge  (w2)
(w3) edge  (w2)
(w3) edge  (w4)
(w5) edge  (w4)
(w5) edge  (w6)
(w7) edge  (w6);
\end{tikzpicture}
\endpgfgraphicnamed
\caption{A zig-zag-path}
\label{fig:zigzag}
\end{center}
\end{figure}

We introduce now vertex sets $V_A$ and $V_B$.

\begin{definition}
$A \in V_A$, whereas any other vertex $A^\prime$ is in $V_A$ if and only if there is a zig-zag-path from $A$ to $A^\prime$ of even length such that the first edge (incident to $A$) of this path is admissible. $V_B$ is defined in the same way as $V_A$ by replacing $A$ by $B$. Equivalently, any point of $V_B$ can be reached by an appropriate zig-zag-path of odd length starting in $A$.
\end{definition}

\begin{remark}
Note that also for connected $G$ without zig-zag-cycles, there might be vertices being neither in $V_A$ nor in $V_B$. Moreover, the intersection $V_A \cap V_B$ might be non-empty. 
\end{remark} 

If both players have not discussed a strategy before, they can restrict to the subgraph  $G(V_A \cup V_B)$ immediately. Namely, this set can be constructed in the following way: Starting in $A$, add all adjacent vertices $B^\prime$ with an admissible orientation  of the edge $AB^\prime$ (i.e. all $B^\prime$ candidates for $B$). For each of them, add all adjacent vertices that are candidates for $A$. Continue this procedure. This set is known for both $A$ and $B$ as well knowing the position of each other.

This resulting graph $G'$ will be further modified as follows: Note that the only vertices with both indegree and outdegree non-zero are those belonging to both $V_A$ and $V_B$. Let $v\in V(G')$ with $\textnormal{indeg}(v)\neq 0$ and $\textnormal{outdeg}(v)\neq 0$. This vertex will now be removed and replaced (split) by $v_{\textnormal{in}}$ and $v_{\textnormal{out}}$, vertices incident exactly to the ingoing and outgoing edges of $v$, respectively. Since the procedure strictly decreases the number of vertices with mixed degree, it must be finite.

\begin{figure}[htbp]
   \centering
    \subfloat[before splitting]{
    \beginpgfgraphicnamed{before}
			\begin{tikzpicture}
			[white/.style={circle,draw=black,fill=white,thin,inner sep=0pt,minimum size=2.0mm}]
				\node[white] (w1) [label=left:$v$]
				at (0,0) {};
				\node[white] (w2)
				at (1,2) {};
				\node[white] (w3)
				at (1,-2) {};
				\node[white] (w4)
				at (-1,2) {};
				\node[white] (w5)
				at (2,0) {};
				\node[white] (w6)
				at (-1,-2) {};
				\path[->,very thick]
				(w1) edge  (w2)
				(w1) edge  (w3)
				(w1) edge  (w4)
				(w5) edge  (w1)
				(w6) edge  (w1);
			\end{tikzpicture}
		\endpgfgraphicnamed}
		\qquad
		\subfloat[after splitting]{
		\beginpgfgraphicnamed{after}
			\begin{tikzpicture}
			[white/.style={circle,draw=black,fill=white,thin,inner sep=0pt,minimum size=2.0mm}]
				\node[white] (w1) [label=left:$v_{\textnormal{out}}$]
				at (0,0) {};
				\node[white] (w11) [label=left:$v_{\textnormal{in}}$]
				at (4,0) {};
				\node[white] (w2)
				at (1,2) {};
				\node[white] (w3)
				at (1,-2) {};
				\node[white] (w4)
				at (-1,2) {};
				\node[white] (w5)
				at (6,0) {};
				\node[white] (w6)
				at (3,-2) {};
				\path[->,very thick]
				(w1) edge  (w2)
				(w1) edge  (w3)
				(w1) edge  (w4)
				(w5) edge  (w11)
				(w6) edge  (w11);
			\end{tikzpicture}
		\endpgfgraphicnamed}
   \caption[]{Split vertex $v$}
  \label{fig:splitv}
\end{figure}
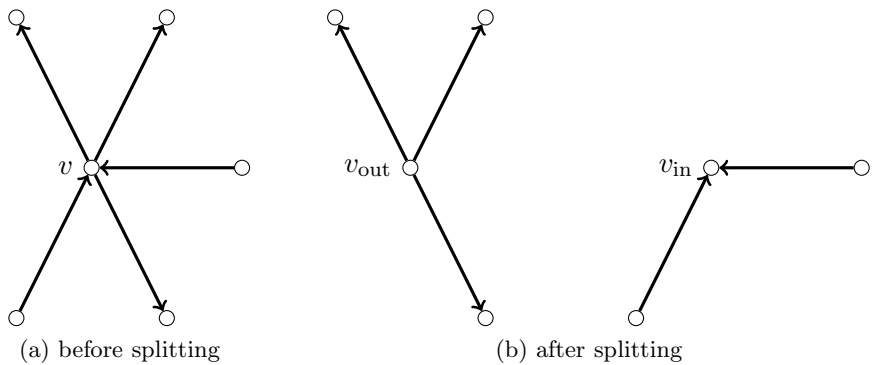

\begin{remark}
The graph $G_s$ obtained after applying the sequence of all possible splits to $G^\prime$ has only zig-zag paths and all edges are admissible.

It is important to notice that $G_s$ contains a zig-zag cycle if and only if $G^\prime$ does. Indeed, suppose $G_s$ has such a cycle. Then, there are two cases:
\begin{itemize}
\item No pair of vertices on the cycle stem from the same split vertex. Then, it must also have existed before splitting, since the procedure does not add new edges.
\item There exists at least a pair $v_{\textnormal{in}},v_{\textnormal{out}}$ that stem from the same vertex $v$ before the splitting, assuming without loss of generality that $v$ was the last such split vertex. In this case the cycle already existed before splitting, passing twice through $v$.
\end{itemize}
\end{remark}

\begin{figure}[htbp]
   \centering
    \subfloat[$G^\prime$]{
    \beginpgfgraphicnamed{G_before}
			\begin{tikzpicture}
			[white/.style={circle,draw=black,fill=white,thin,inner sep=0pt,minimum size=2.0mm},
			black/.style={circle,draw=black,fill=black,thin,inner sep=0pt,minimum size=2.0mm},
			gray/.style={circle,draw=black,fill=gray,thin,inner sep=0pt,minimum size=2.0mm}]
				\node[white] (w1) [label=left:$A$]
				at (-1,0) {};
				\node[black] (b1) [label=right:$B$]
				at (1,0) {};
				\node[black] (b2) [label=right:$t$]
				at (2,-2) {};
				\node[white] (w2) [label=left:$s$]
				at (-2,-2) {};
				\node[gray] (g1) [label=left:$u$]
				at (0,2) {};
				\node[gray] (g2) [label=below:$v$]
				at (0,-2) {};
				\path[->,very thick]
				(w1) edge  (b1)
				(g1) edge  (b1)
				(g2) edge  (b1)
				(w1) edge  (g1)
				(w1) edge  (g2)
				(w2) edge  (g2)
				(g2) edge  (b2);
			\end{tikzpicture}
		\endpgfgraphicnamed}
		\qquad
		\subfloat[$G_s$]{
		\beginpgfgraphicnamed{G_after}
			\begin{tikzpicture}
			[white/.style={circle,draw=black,fill=white,thin,inner sep=0pt,minimum size=2.0mm},
			black/.style={circle,draw=black,fill=black,thin,inner sep=0pt,minimum size=2.0mm}]
				\node[white] (w1) [label=left:$A$]
				at (-1,0) {};
				\node[black] (b1) [label=right:$B$]
				at (1,0) {};
				\node[black] (b2) [label=right:$t$]
				at (2,-2) {};
				\node[white] (w2) [label=left:$s$]
				at (-2,-2) {};
				\node[white] (g1w) [label=right:$u_{\textnormal{out}}$]
				at (1,2) {};
				\node[black] (g1b) [label=left:$u_{\textnormal{in}}$]
				at (-1,2) {};
				\node[white] (g2w) [label=below:$v_{\textnormal{out}}$]
				at (1,-2) {};
				\node[black] (g2b) [label=below:$v_{\textnormal{in}}$]
				at (-1,-2) {};
				\path[->,very thick]
				(w1) edge  (b1)
				(g1w) edge  (b1)
				(g2w) edge  (b1)
				(w1) edge  (g1b)
				(w1) edge  (g2b)
				(w2) edge  (g2b)
				(g2w) edge  (b2);
			\end{tikzpicture}
		\endpgfgraphicnamed}
   	\caption[]{Obtaining the splitgraph}
  	\label{fig:splitg}
\end{figure}
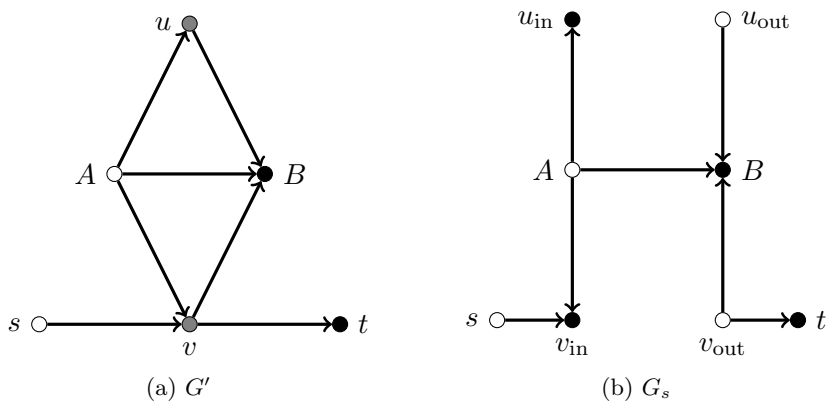

We can now easily reduce the problem to the undirected case and prove, with the exact same argument, the following theorem.
\begin{theorem}
A directed graph $G$ has all edges guessable under the above conditions if and only if it does not contain zig-zag cycles (i.e. it is a zig-zag forest). All conclusions relating to the number of necessary steps or to who makes the first guess hold. The comparison of the cases of having discussed a strategy before or not concludes along the same lines.
\end{theorem}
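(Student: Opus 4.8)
The plan is to reduce the whole statement to the undirected results of Section~\ref{sec:undirected} applied to the split graph. Fix an admissible edge together with a placement of $A$ and $B$. Exactly as in the undirected case the relevant board is common knowledge: both players can reconstruct $V_A\cup V_B$ by the breadth-first procedure described before Figure~\ref{fig:splitv}, hence the subgraph $G' = G(V_A\cup V_B)$, and then the deterministic splitting procedure produces $G_s$. So it suffices to analyse the game on $G_s$. The key structural point is that in $G_s$ every vertex has either all incident edges outgoing or all incident edges ingoing: mixed-degree vertices are precisely the ones removed by splitting, while leaves and pure vertices are untouched. Consequently $G_s$ is bipartite with parts ``sources'' and ``sinks'', \emph{every} path in $G_s$ is automatically a zig-zag path, and a cycle in $G_s$ is the same thing as a zig-zag cycle in $G_s$. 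Combined with the remark that $G_s$ contains a zig-zag cycle if and only if $G'$ does, a zig-zag cycle of $G$ through the chosen edge survives in $G_s$ as an ordinary cycle, and conversely.

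Next I would make precise that the directed game on $G_s$ is isomorphic, as an epistemic game, to the ordinary undirected game on the underlying graph $\overline{G_s}$. Because every vertex of $G_s$ is pure, the orientation of the edge joining the two players carries no information beyond the identity of the adjacent vertex itself: a player's set of candidate positions is exactly the neighbourhood of the other player's vertex, just as in the undirected game (this is the content of Lemma~\ref{lem:edge_guessed_directed}, now read on $\overline{G_s}$). A vertex of $G$ that was split is ``resolved'' by the orientation the player sees, namely the player sits on $v_{\mathrm{in}}$ or $v_{\mathrm{out}}$ according to whether the visible edge points towards or away from it; moreover an admissible zig-zag path lifts to an ordinary path through the correct copies, so $\overline{G_s}$ is connected. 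Under this identification a strategy in one game is a strategy in the other, and a guess is correct on one side if and only if it is on the other.

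With the reduction in hand, everything follows by quoting Section~\ref{sec:undirected}. All edges of $G$ are guessable under the stated conditions if and only if, for every admissible placement, the graph $\overline{G_s}$ is a forest --- equivalently $G$ contains no zig-zag cycle --- by Theorem~\ref{thm:tree_only}; and, just as in that proof, a zig-zag cycle already carries a non-guessable edge, so ``no zig-zag cycle'' cannot be weakened. Theorem~\ref{thm:main_strategy} transfers verbatim with ``path'' read as ``zig-zag path'', so that cutting off (zig-zag) leaves is again the unique winning strategy when there is no prior discussion; Theorem~\ref{th:who_is_first} transfers with $h_B(A)$ and $h_A(B)$ computed in $\overline{G_s}$, i.e.\ via longest admissible zig-zag paths from $A$ through $B$ and from $B$ through $A$; and Propositions~\ref{prop:independent} and~\ref{prop:strategy_both} go through unchanged.

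The main obstacle is the verification carried out in the second paragraph: one must check that the common-knowledge reduction to $G'$ loses no information (every position a player could conceivably be at really does lie in $V_A$, resp.\ $V_B$), that the splitting neither identifies nor separates game states incorrectly, and that a ``longest path'' computation in $G$ translates faithfully to the corresponding quantity in $\overline{G_s}$. This is precisely what legitimizes invoking ``the exact same argument''; everything after it is bookkeeping.
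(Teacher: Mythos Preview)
Your proposal is correct and follows exactly the route the paper intends: the paper sets up $G'$, the splitting to $G_s$, and the remark that zig-zag cycles survive, and then simply asserts that one ``reduces to the undirected case with the exact same argument.'' You have carried out that reduction explicitly---observing that $G_s$ has only pure vertices so that $\overline{G_s}$ is an honest undirected game board to which Theorems~\ref{thm:tree_only}, \ref{thm:main_strategy}, \ref{th:who_is_first} and Propositions~\ref{prop:independent}, \ref{prop:strategy_both} apply verbatim---which is precisely what the paper leaves to the reader.
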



\section{Further directions}\label{sec:future}
One variation, for the two-player case, would be introducing ``cycle'' as a third allowed answer in addition to saying nothing or the results. This seems to allow further edges to be correctly guessed in a general graph. 

On the other hand, one can naturally generalize the problem to a multiplayer game where $n$ players are the vertices of a subgraph $H$ of $G$ with known isomorphism class. The players may or may not know their positions in $H$, otherwise the rules of the game stay the same. It would be interesting to describe how the game runs in at least a number of particular such cases.


\section*{Acknowledgment}
\addcontentsline{toc}{section}{Acknowledgment}

We would like to thank Thomas Hixon for numerous fruitful discussions.


\bibliographystyle{plain}
\bibliography{Game_on_graphs}

\end{document}